\documentclass[12pt,reqno,UTF8,twoside]{amsart}
\usepackage{mathrsfs}
\usepackage{amsfonts,amssymb,amsmath,amsthm}
\usepackage{cite}
\usepackage[colorlinks=true,citecolor=red]{hyperref}
\usepackage{tocvsec2}
\usepackage{titletoc}
\usepackage{geometry}
\usepackage{bm}
\usepackage{indentfirst}
\usepackage{graphicx}
\usepackage{float} 
\usepackage{booktabs}
\usepackage{longtable}
\usepackage{subfigure}
\usepackage{stmaryrd}
\usepackage{enumerate}
\usepackage{tikz}
\usepackage{ulem}
\usepackage{color,soul}
\usepackage{setspace}
\usepackage{stmaryrd}
\usepackage[pagewise]{lineno} 
\usepackage{appendix}
\usepackage{cases}
\usepackage{todonotes}
\usepackage{enumitem}

\numberwithin{equation}{section}
\newtheorem{theorem}{Theorem}[section]
\newtheorem{lemma}[theorem]{Lemma}

\newtheorem{proposition}[theorem]{Proposition}
\numberwithin{equation}{section}
\newtheorem{remark}[theorem]{Remark}

\newcounter{thm}

\newcommand{\T}{\mathbb{T}}
\newcommand{\N}{\mathbb{N}}
\newcommand{\R}{\mathbb{R}}
\newcommand{\Z}{\mathbb{Z}}
\newcommand{\E}{\mathbb{E}}
\newcommand{\Pb}{\mathbb{P}}

\tikzstyle{process} = [rectangle, rounded corners, minimum width=4cm, minimum height=1cm, text centered, draw=black, align=center]
\tikzstyle{point} = [coordinate, on grid]
\tikzstyle{arrow} = [->,>=stealth]
\tikzstyle{dasharrow} = [dashed,->,>=stealth]

\hypersetup{linkcolor=blue}

\makeatletter
\@namedef{subjclassname@2020}{\textup{2020} Mathematics Subject
	Classification}
\makeatother

\allowdisplaybreaks

\begin{document}
	
\title[Benjamin-Ono equations with 2D control input]{
The Benjamin-Ono equation with 2D control input: 
approximate controllability and its application}
	
\author[J.-C. Zhao]{Jia-Cheng Zhao}
\address[Jia-Cheng Zhao]{School of Mathematical Sciences, Shenzhen University, 518061, Shenzhen, P.R.China.}
\email{zjc@szu.edu.cn}
	
\subjclass[2020]{35Q53, 35R60, 93B05}
	
\keywords{Benjamin-Ono equation, approximate controllability, geometric control approach, random perturbation} 
	
\begin{abstract}
We establish the approximate controllability in $L^2$ for the nonlinear Benjamin-Ono equation on torus via two-dimensional control input. Our proof  is based on adaptations of geometric control approach introduced by Agrachev and Sarychev. As an application of this control result, we 
study long-time dynamics of a randomly forced equation. 
It is proved that the 
trajectories are unbounded in Sobolev norms almost surely, when the random force is nondegenerate and statistically periodic in time.
\end{abstract}

\maketitle

\setcounter{tocdepth}{2}
	
\section{Introduction}\label{Section-1}

In the present paper, we consider a nonlinear Benjamin-Ono (BO) equation on the torus $\T=\R/2\pi\Z$, reading
\begin{equation}\label{Control-Problem-0}
\left\{
\begin{array}{ll}
\partial_tu+\mathcal H\partial^2_{x}u+u\partial_xu=\eta,\\
u(0,x)=u_0(x).
\end{array}
\right.
\end{equation}
Here, the unknown $u=u(t,x)$ is real-valued and $\mathcal H$ denotes the Hilbert transform, defined via
$\widehat{\mathcal H u}(k)=-i{\rm sgn}(k)\widehat u(k)$ for $k\in\Z$.
The function $\eta=\eta(t,x)$ denotes an external force that excites only in two directions, i.e.
\begin{equation}\label{source-term}
\eta(t,x)=\eta_1(t)\sin x+\eta_2(t)\cos x, 
\end{equation}
in which the real functions $\eta_1,\eta_2$ play the role of control or random perturbation. 

Our main result for system  (\ref{Control-Problem-0}) is twofold. Considering (\ref{Control-Problem-0}) as a controlled system, we establish the global approximate controllability in the scale of $L^2$. We  also study long-time dynamics via control property of the equation. More precisely, by taking $\eta_1,\eta_2$ as random nondegenerate  perturbations whose law is time-periodic, we show that for each $s>0$, the $H^s$-norm of trajectory $u(t,\cdot)$ blows up almost surely as time goes to infinity. See Theorems \ref{Main-theorem-1} and \ref{Main-theorem-2} below for exact version of these results.

The BO equation arises as a model describing the propagation of internal waves in stratified fluids of great depth \cite{Benjamin-67,Ono-75}. The theory of well-posedness has been well developed in recent decades; see  \cite{MP-12,GKT-23,Tao-04,KLV-24} for some contributions. The  equation has also been studied in many different contexts. For instance, we refer the reader to \cite{GKT-21,GKT-24,Sun-21} for integrability, \cite{IMOS-21,IT-19,G-22} for long-time dynamics and \cite{LO-05,LR-15,LLR-15} for controllability.

As we focus on the controllability for (\ref{Control-Problem-0}), let us give a brief description of the aforementioned literature along this line. Linares and Ortega \cite{LO-05} gave a first result  illustrating exact controllability in any time for the linearized BO equation on $\T$:
\begin{equation*}
\partial_tu_L+\mathcal H\partial^2_{x}u_L=f
\end{equation*}
with control input $f=f(t,x)$ in the form
\begin{equation}\label{control-form}
f=\mathcal Gh:=g(x)\left(
h(t,x)-\int_0^{2\pi}g(y)h(t,y)dy
\right),
\end{equation}
where $g$ is a smooth nonnegative function. For the full BO equation 
$$
\partial_tu+\mathcal H\partial^2_{x}u+u\partial_xu=f,
$$
the local exact controllability in any time was established by Linares and Rosier \cite{LR-15} in the scale of $H^s\,(1/2<s\leq 2)$. The local result was then extended to $L^2$ by Laurent, Linares and Rosier \cite{LLR-15}, while the authors also obtained the global exact controllability in large time by proving a stabilization result.

The control $f$ given by (\ref{control-form}) is allowed to be localized on $\T$, and may act directly on all Fourier modes of the system. By contrast, much less seems to be known concerning the setting of finite-dimensional control.
Our aim is to establish the global approximate controllability in any time for the BO equation (\ref{Control-Problem-0}), by  control input involving only two Fourier modes.

The main result of this paper is stated as follows. Let us write $L^2_0$ be the space of those functions $f\in L^2(\T)$ of mean zero, i.e. $$[f]:=\frac{1}{2\pi}\int_0^{2\pi}f(x)dx=0.$$ The usual $L^2$-norm is denoted by $\|\cdot\|$.

\begin{theorem}[Global approximate controllability]\label{Main-theorem-1}
Let $T>0$ and $u_0,u_1\in L^2_0$ be arbitrarily given. Then for every $\varepsilon>0$, there exist piecewise constant control laws $\eta_1,\eta_2\colon[0,T]\rightarrow \R$ such that 
$$
\|u(T)-u_1\|<\varepsilon,
$$
where $u\in C([0,T];L^2_0)$ is the solution of \eqref{Control-Problem-0},\eqref{source-term}.
\end{theorem}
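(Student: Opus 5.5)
We follow the Agrachev--Sarychev geometric control strategy, adapted to the dispersive setting. Set $\mathcal H_0=\mathrm{span}\{\sin x,\cos x\}$ and define recursively $\mathcal H_{j+1}$ as the largest subspace of $L^2_0$ spanned by elements of the form
$$
\zeta_0-\sum_{i=1}^m B(\zeta_i),\qquad \zeta_0,\dots,\zeta_m\in\mathcal H_j,\qquad B(\zeta):=\zeta\partial_x\zeta .
$$
\textbf{Step 1: saturation.} Since $B$ is quadratic, the differences $B(\zeta+\xi)-B(\zeta)-B(\xi)$ produce the symmetric bilinear forms $\zeta\partial_x\xi+\xi\partial_x\zeta=\partial_x(\zeta\xi)$. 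Starting from $\sin x,\cos x$, the products give
$$
\partial_x(\sin^2x)=\sin 2x,\qquad \partial_x(\sin x\cos x)=\cos 2x .
$$
Products of $\cos kx,\sin kx$ with $\cos x,\sin x$ then yield the modes $\pm(k\pm1)$. By induction, $\mathcal H_j$ contains all modes $|k|\le j+1$. Hence $\mathcal H_\infty:=\bigcup_j\mathcal H_j$ is dense in $L^2_0$.

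\textbf{Step 2: extension principle.} We show that approximate controllability with controls in $\mathcal H_{j+1}$ (in time $T$, with $L^2$ error) implies approximate controllability with controls in $\mathcal H_j$. The key is a limit lemma. Consider the system
$$
\partial_tu+\mathcal H\partial_x^2u+(u+\delta^{-1/2}\zeta)\partial_x(u+\delta^{-1/2}\zeta)=\eta_0
$$
on the short interval $[0,\delta]$, with $\zeta\in\mathcal H_j$ fixed. Substituting $w=u+\delta^{-1/2}\zeta$ shows that this corresponds to the original equation driven by the control $\eta_0+\delta^{-1}\zeta-\delta^{-1/2}(\ldots)$, together with the impulse $u\mapsto u\pm\delta^{-1/2}\zeta$. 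The impulse is realized by adding $\pm\delta^{-1/2}\zeta$ via large constant controls over vanishing times, which is possible since $\zeta\in\mathcal H_j$; this is done first for $\mathcal H_0$ directly. Rescaling time, $u(\delta)\to u_0+\eta_0-B(\zeta)$ as $\delta\to0$. A convexification/time-averaging argument (fast switching between $\zeta_1,\dots,\zeta_m$) then produces the effective forcing $\eta_0-\sum B(\zeta_i)$.

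To avoid unbounded controls, the precise implementation uses the standard trick of conjugating by the shift $u\mapsto u+\zeta(t)$ with $\zeta$ a slowly varying piecewise-linear function, keeping the control piecewise constant in $\mathcal H_0$. We also need continuity of the solution map $(u_0,\eta)\mapsto u$ in $C([0,T];L^2_0)$ under $L^2$ perturbations and under relaxation (weak-in-time convergence of controls). For the BO equation this requires the $L^2$ well-posedness of Molinari/Gérard--Kappeler--Topalov, including continuity in the forcing with respect to weak topologies. Obtaining this continuity in $L^2$, where there is no smoothing and the nonlinearity is merely quasilinear, is the main obstacle. We would first prove the convergence for smooth $H^s$ data ($s$ large), using energy estimates and a Gronwall argument with constants uniform in $\delta$. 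We would then pass to $L^2$ by density and the continuity of the flow map on bounded sets.

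\textbf{Step 3: conclusion.} Given $u_0,u_1\in L^2_0$, first approximate $u_1$ by a smooth $\tilde u_1$ and $u_0$ by a smooth datum. By density of $\mathcal H_\infty$, pick $N$ such that controls in $\mathcal H_N$ drive $u_0$ $\varepsilon$-close to $\tilde u_1$. One way to do this is to use a connecting path $\bar u(t)=(1-t/T)u_0+(t/T)\tilde u_1$, so that $\eta=\partial_t\bar u+\mathcal H\partial_x^2\bar u+B(\bar u)$, and then project $\eta$ onto $\mathcal H_N$ with small error in $L^1(0,T;L^2)$. Finally, apply Step 2 $N$ times to descend from $\mathcal H_N$ to $\mathcal H_0$. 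At each level, the controls are approximated by piecewise constant ones, using continuity in $L^1_tL^2_x$.
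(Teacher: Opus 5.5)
Your saturation step and your limit lemma (after conjugating by the constant shift $\delta^{-1/2}\zeta$, $u(\delta)\to u_0+\eta_0-B(\zeta)$ as $\delta\to0$, with control $\delta^{-1}\eta_0$) coincide with Lemmas~\ref{Lemma-Saturating} and~\ref{Lemma-asymptotic}. The architecture you build on them, however, has a real gap.

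\textbf{Step 2.} You assert a fixed-time descent: ``$\mathcal H_{j+1}$-valued controls in time $T$ imply $\mathcal H_j$-valued controls in time $T$''. The limit lemma only gives a displacement in vanishing time. It says nothing about reproducing an $\mathcal H_{j+1}$-valued control acting on an interval of positive length.

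The classical route for that is a fast-oscillating shift $\zeta(t)$ with control $\eta+\dot\zeta$, plus convexification. It needs continuity of the flow under weak-in-time convergence of controls for $L^2$ data. You name this as the main obstacle and leave it open. Your description (a ``slowly varying'' shift, controls ``in $\mathcal H_0$'' at level $j$) is also inconsistent with that route.

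Your workaround is to prove convergence for smooth data and pass to $L^2$ ``by density and continuity of the flow map on bounded sets''. This fails as stated. Along the limit, the controls and data are unbounded: $\delta^{-1}\eta_0$ and $u_0+\delta^{-1/2}\zeta$, or $\dot\zeta$ under fast switching. So the local Lipschitz constants of Proposition~\ref{Prop-GWP} are not uniform.

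\textbf{Step 3.} It has the same circularity. The control built for a smoothed $\tilde u_0$ has size governed by $\|\tilde u_0\|_{H^2}$, through $\mathcal H\partial_x^2\bar u+B(\bar u)$, and is then amplified by the descent. Hence the Lipschitz constant needed to transfer it back to the rough $u_0$ grows as $\tilde u_0\to u_0$, and nothing makes the transfer error small.

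\textbf{The missing idea.} Work in small time throughout and use an equilibrium to fill up $[0,T]$.

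First, the paper proves Lemma~\ref{Lemma-asymptotic} directly for $u_0\in L^2_0$, by an $L^2$ energy estimate for $v^\delta-P_Nv^0$. Only the comparison function $P_Nv^0$ must be smooth (a weak--strong argument), and $\|(I-P_N)u_0\|$ is small, so no uniform continuity of the flow is needed. Since $\zeta$ is time-independent, $\mathcal R_\delta(u_0,\delta^{-1/2}\zeta,\delta^{-1}\eta)=\mathcal R_\delta(u_0+\delta^{-1/2}\zeta,\delta^{-1}\eta)-\delta^{-1/2}\zeta$. Thus no extended system with a time-dependent shift ever appears.

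Second, Lemma~\ref{Lemma-smalltime} obtains small-time approximate controllability to $u_0+\mathcal H_j$, for every $u_0$, by induction. Each step composes three short moves: an approximate shift by $+\delta^{-1/2}\zeta$ (induction hypothesis), the limit lemma, and an approximate shift back. Because the hypothesis holds for every initial state, sums $\sum_iB(\zeta_i)$ come from successive composition rather than convexification. Only continuity of the flow at a fixed control is used.

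Third, for prescribed $T$, one steers $u_0$ near $0$ in short time, stays at the equilibrium $0$ with zero control, then steers $0$ near $u_1$ in short time. No connecting path, relaxation, or smoothing of $u_0$ is required.
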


\begin{remark}\label{changevariable-1}
We consider here the mean-zero solutions of \eqref{Control-Problem-0},\eqref{source-term} for the sake of simplicity. In fact, one can obtain
the approximate controllability for \eqref{Control-Problem-0} in the ``mass-conserved'' sense, i.e. the property stated in Theorem~{\rm\ref{Main-theorem-1}} remains true whenever $u_0,u_1\in L^2(\T)$ with $[u_0]=[u_1]$. This can be done by making the change of variable $\tilde u(t,x)=u(t,x-t[u_0])$ {\rm(}cf.~{\rm\cite[Remark~1.1]{LLR-15})}.
\end{remark}

The proof of Theorem \ref{Main-theorem-1} follows from adaptations of the idea by Nersesyan \cite{N-21}, which provides a variant of geometric control approach introduced by Agrachev and Sarychev~\cite{AS-06,AS-05}. Roughly speaking, one may first establish the approximate controllability to an affine space $u_0+\mathcal H_0$, where $\mathcal H_0$ is the subspace generated by those Fourier modes in which the control input excites.
By induction the approximate controllability can then be ``propagated'' to an increasing sequence of spaces $u_0+\mathcal H_k,k\geq 0$ by means of nonlinearity of the equation. This will be accomplished by an asymptotic property (Lemma \ref{Lemma-asymptotic}) combined with a technique of large controls on small time intervals. Finally, the global approximate controllability follows whenever $\bigcup_{k\geq 0}\mathcal H_k$ is dense in $L^2_0$, which is referred to as ``saturating property'' (Lemma~\ref{Lemma-Saturating}). 

The Agrachev-Sarychev approach is widely used for addressing parabolic equations with additive controls; see, e.g. \cite{S-18,GHM-18,N-21} and references therein. Nevertheless, it is also available for dispersive equations in spite of the absence of parabolic regularization, as the issue of approximate controllability does not involve strong dissipation in high frequency; see, e.g. \cite{S-12,C-23,CXZ-23} for the settings of the Schr\"{o}dinger equation and the KdV equation. 

In addition, recent years have witnessed a considerable interest on controllability problems in the context of bilinear control; see e.g. \cite{P-24,DPU-25,DN-25,BP-25,BCP-25}. It is an interesting problem for further research whether similar results to Theorem \ref{Main-theorem-1} are valid for the periodic BO equation with bilinear control.

As an application of control result, we study long-time behavior of the randomly forced BO equation. Assume that $\eta_l,l=1,2$ are random noises that are statistically $T$-periodic, i.e.
\begin{equation*}
\eta^\omega_l(t)=
\eta_{l,k}^\omega(t-kT),\quad t\in [kT,(k+1)T),\,k\in\N,
\end{equation*}
where $\{\eta_{l,k};l=1,2,k\in\N\}$ is a sequence of i.i.d.~$L^2(0,T)$-valued random variables. Each $\eta_{l,k}$ is of the form
\begin{equation*}
\eta_{l,k}^\omega(t)=\sum_{j=1}^\infty b_j\xi_{l,k,j}^\omega e_j(t),
\end{equation*}
where $b_j$ are nonzero real numbers with $\sum_{j=1}^\infty b_j^2<\infty$, $e_j$ constitutes an orthonormal basis of $L^2(0,T)$, and $\xi_{l,k,j}^\omega$ are independent real-valued random variables having a continuous density $\rho_j$ such that
\begin{equation*}
\int_{-\infty}^\infty x^2\rho_j(x)dx= 1,\quad \rho_j(x)>0,\forall x\in \R.
\end{equation*}

\begin{theorem}\label{Main-theorem-2}
Let $s\in(0,1]$ be arbitrarily given. With the above setting, the trajectory of \eqref{Control-Problem-0},\eqref{source-term} for $u_0\in H^s(\T)$ is unbounded in $H^s(\T)$ almost surely.
\end{theorem}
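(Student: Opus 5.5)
The idea is to combine the conservation laws of the unforced Benjamin--Ono equation with the approximate controllability of Theorem~\ref{Main-theorem-1}, via a Borel--Cantelli type argument for the Markov chain $u_k:=u(kT)$. Because the noise is statistically $T$-periodic with i.i.d.\ pieces, $(u_k)_{k\in\N}$ is a homogeneous Markov chain in $H^s$. Fix $R>0$; it suffices to show that, almost surely, $\|u_k\|_{H^s}>R$ for some $k$. Taking a countable union over $R\in\N$ then shows the trajectory is a.s.\ unbounded. By the strong Markov property, this follows if we can find $p=p(R)>0$ and an integer $m=m(R)$ such that
$$
\inf_{\|v\|_{H^s}\le R}\Pb\Big(\max_{1\le j\le m}\|u_j\|_{H^s}>R \,\Big|\, u_0=v\Big)\ge p .
$$
Indeed, the probability of staying in the ball $B_R^s$ up to time $nmT$ is then at most $(1-p)^n\to0$.

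To get this uniform lower bound, I would first use the control result to exhibit a deterministic control driving any $v$ in the ball out of it.

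\textbf{Step 1: a target outside the ball.} Pick a smooth mean-zero target $w$ with $\|w\|_{H^s}$ huge. Because of the compact embedding $H^s\subset L^2$ and lower semicontinuity, I need an $L^2$-quantity detecting large $H^s$ norm. The natural choice is the frequency-truncated norm $\|P_Nu\|_{H^s}$: it is bounded by $\|u\|_{H^s}$ and is continuous in $L^2$. Choose $w$ with $\|P_N w\|_{H^s}>R+1$. If $\|u(T)-w\|<\varepsilon$ with $\varepsilon$ small depending on $N$, then $\|u(T)\|_{H^s}\ge\|P_Nu(T)\|_{H^s}>R$.

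\textbf{Step 2: controllability and compactness.} The mean is conserved, so I first reduce to mean zero as in Remark~\ref{changevariable-1}; the relevant initial data then lie in a bounded set with a fixed mean, which I need to handle uniformly. Theorem~\ref{Main-theorem-1} gives piecewise constant controls $\eta_1,\eta_2$ with $\|u(T;v,\eta)-w\|<\varepsilon/2$ for each fixed $v$. The ball $B_R^s$ is compact in $L^2$, and the solution map $(v,\eta)\mapsto u(T)$ is continuous from $L^2\times L^2(0,T)^2$ to $L^2$ by the $L^2$ well-posedness theory (Molinet). Hence each control works on an $L^2$-neighbourhood of $v$, and finitely many controls $\eta^{(1)},\dots,\eta^{(M)}$ cover $B_R^s$.

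\textbf{Step 3: from deterministic controls to probability.} By continuity again, each $\eta^{(i)}$ comes with a $\delta$ such that every control in its $\delta$-neighbourhood in $L^2(0,T)^2$ still gives $\|u(T)-w\|<\varepsilon$. The law of $(\eta_{1,0},\eta_{2,0})$ charges every ball in $L^2(0,T)^2$. This follows from the nondegeneracy assumptions: the $b_j$ are nonzero, each density $\rho_j$ is positive and continuous, and the tail $\sum_{j>J} b_j^2\xi_j^2$ is small with positive probability by Chebyshev. Therefore the infimum of the ball probabilities over the finitely many $i$ is some $p>0$, which gives the required bound with $m=1$.

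\textbf{Main obstacle.} The delicate point is the continuity of the solution map in $L^2$ with respect to $L^2$-in-time forcing, uniformly on the compact set. This needs the $L^2$ well-posedness theory for forced Benjamin--Ono, for instance via the gauge transform or through Laurent--Linares--Rosier. I would take that continuity as established earlier in the paper, since the proof of Theorem~\ref{Main-theorem-1} requires it anyway. A second, minor point is that the trajectory must be $H^s$-valued so that the norms above make sense. This follows from persistence of regularity for $s\in(0,1]$.
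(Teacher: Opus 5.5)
Your proposal is correct and follows essentially the same route as the paper. You obtain a uniform one-step exit bound $\inf_{\|v\|_s\le R}\Pb_v\{\|u_1\|_s>R\}>0$ from Theorem~\ref{Main-theorem-1}, $L^2$-continuity of the solution map, compactness of the $H^s$-ball in $L^2$ and nondegeneracy of the noise, and then iterate via the Markov property to get $(1-p)^n$ decay. The only inessential difference is your detector $\|P_N\cdot\|_{H^s}$: the paper steers directly to a state of $L^2$-norm larger than $R$ and uses $\|u\|_s\ge\|u\|$, which makes the frequency truncation unnecessary.
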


Our interest on construction of unbounded solutions via random perturbations is inspired by the work of Duca and Nersesyan \cite{DN-25}, where the authors obtained unbounded solutions for the nonlinear Schr\"{o}dinger equation with random potential. Roughly speaking, the control result (Theorem \ref{Main-theorem-1}) means the existence of control input driving the solution in time $T$ to a state with arbitrarily large norm. This, together with continuity of solution map and the nondegeneracy of the random noise, implies 
$$
\inf_{\|u_0\|_{_{H^s}}\leq M}\Pb\{\|u(T)\|_{_{H^s}}>M\}>0
$$
for given $M>0$. Finally, invoking the Markov property and the the Borel-Cantelli lemma, we arrive at 
$$
\Pb\{\|u(kT)\|_{_{H^s}}>M\text{ for some }k\in\N\}=1.
$$
The desired result then follows.

In the unperturbed case (i.e. $\eta\equiv 0$), the authors in \cite{GKT-23,GKT-21} provided deep insights in the long-time dynamics of the periodic BO equation, by means of the Birkhoff coordinates. Their results show that the solutions are almost periodic and the set of orbit is relatively compact.

\medskip

This paper is organized as follows. Section \ref{Section-2} includes the results on global well-posedness for (\ref{Control-Problem-0}) and its generalization. Especially, the continuity of solution map is fundamental to our proof of the main results. In Section \ref{Section-3}, we shall establish the result of global approximate controllability (Theorem \ref{Main-theorem-1}) for system (\ref{Control-Problem-0}),(\ref{source-term}). Finally, we investigate the random dynamics stated in Theorem \ref{Main-theorem-2} in Section \ref{Section-4}, as an application of the control result.

\medskip

\noindent {\bf Notation.} In the present paper we use the following notation.

\begin{itemize}
\item Define $\widehat{f}(k)$ to be the Fourier transform on $\Z$ for a $2\pi$-periodic function, so that 
$f(x)=\sum_{k}\widehat{f}(k)e^{ikx}$.
We also write 
\begin{equation*}
P_0f=[f]=\widehat{f}(0),\quad P_{\pm}f=\sum_{k= 1}^\infty\widehat{f}(\pm k)e^{\pm ikx}.
\end{equation*}
Similarly, $P_{N}$ and $P_{\geq N}$ stand for the projections on the Fourier modes of $|k|< N$ and $k\geq N$, respectively.
In addition, $\widehat{\partial_x^{-1}f}(k)=\frac{1}{ik}\widehat{f}(k)$ for $f\in L^2_0(\T)$ and $k\in\Z\setminus\{0\}$.

\item For $s\geq 0$ and $c\in\R$, we introduce the affine space $$H^s_c=H^s(\T)\cap \{[f]=c\}.$$
The usual $H^s$-norm is denoted by $\|\cdot\|_s=\|J^s\cdot\|$ with  $J^s=(1-\partial_x^2)^s$. 

\item 
We write $L^p_TB_x=L^p(0,T;B)$ for any space $B$ of functions in $x$.

\item Let $S(t),t\in\R$ be the group of bounded linear operators generated by $-\mathcal H\partial^2_{x}$ on $H^s_0$.

\item 
For $s,b\in\R$, the Bourgain spaces $X^{s,b},Z^{s,b}$ (under the BO setting) is defined as
$$
\|f\|_{_{X^{s,b}}}^2=\|\langle\tau+k|k|\rangle^{b}\langle k\rangle^{s}\widehat f\|^2_{_{L^2_{\tau,k}}},\quad
\|f\|_{_{Z^{s,b}}}^2=\|\langle\tau+k|k|\rangle^{b}\langle k\rangle^{s}\widehat f\|_{_{L^2_k L^1_\tau}}^2.
$$
\item The space $Y^s$, defined via
$$
\|f\|_{_{Y^s}}=\|f\|_{_{X^{s,1/2}}}+\|f\|_{_{ Z^{s,0}}},
$$
is contained in the space $C(\R;H^s)$ of continuous functions. 

\item
For $T>0$, the restricted spaces on $[0,T]$ is given by
$$
\|f\|_{_{B_T}}=\inf\{\|\tilde f\|_{_{B}};\tilde f|_{_{[0,T]\times\T}}=f \},
$$
where $B=X^{s,b}, Z^{s,b}$ or $Y^s$. Let us set
$$
X^s_T=C([0,T];H^s_0)\cap L^4_TW^{s,4}_x\cap X^{s-1,1}_T.
$$

\item For two curves $f\colon[0,T_1]\rightarrow L^2_0$ and $g\colon[0,T_2]\rightarrow L^2_0$, the notation $f*g$ stands for their concatenation:
$$
(f*g)(t)=\begin{cases}
f(t),\quad t\in [0,T_1],\\
g(t-T_1),\quad t\in(T_1,T_1+T_2].
\end{cases}
$$

\item The letter $C$ denotes a positive generic constant that does not depend on the data of solutions, and $C(b_1,b_2,\cdots,b_k)$ is increasing  in each parameter $b_i$. 

\item For $a\in \R$ we denote by $a^{\pm}$ the constant of the form $a\pm \varepsilon$ with some $0<\varepsilon<<1$.

\end{itemize}

\section{Cauchy problem of the BO equation}\label{Section-2}

This section includes some basic facts on global well-posedness of (\ref{Control-Problem-0}) and its generalization. These results may be well accepted and are reproduced here for easy reference.

We in the sequel consider the following equation
\begin{equation}\label{generalized-problem}
\left\{
\begin{array}{ll}
\partial_tu+\mathcal H\partial^2_{x}(u+\zeta)+(u+\zeta)\partial_x(u+\zeta)=g,\\
u(0,x)=u_0(x),
\end{array}
\right.
\end{equation}
where $\zeta\in C^\infty(\T)\cap L^2_0$ and $g\in L^2(0,T;H^s_0)$. Evidently,  problem (\ref{generalized-problem}) coincides with (\ref{Control-Problem-0}) when $\zeta\equiv 0$.

While the nonlinearity of the BO equation prohibits a direct application of fixed-point arguments, Tao's gauge transform \cite{Tao-04} is widely used for improving the nonlinearity. 
More precisely, we introduce $F=\partial_x^{-1}(u+\zeta)$ and $G=\partial_x^{-1}g$. 
The equation for $F$ reads
\begin{equation*}
\partial_t F+\mathcal H\partial^2_{x}F+\tfrac{1}{2}(\partial_xF)^2=\tfrac{1}{2}P_0(\partial_xF)^2+G.
\end{equation*}
After elementary calculations, it follows that $W:=P_+e^{\frac{i}{2}F}$ satisfies 
\begin{equation*}
\partial_tW-i\partial^2_{x}W=\tfrac{i}{2}P_+[e^{\frac{i}{2}F}(\tfrac{1}{2}P_0(\partial_xF)^2+G-2iP_-\partial_x(u+\zeta))].
\end{equation*}
The equation for $w:=\partial_xW$ (that is referred as to the gauge transform) is in the form
\begin{equation}\label{Problem-Gauge}
\begin{aligned}
&\partial_tw-i\partial^2_{x}w-\tfrac{i}{4}P_0((u+\zeta)^2)w\\
&=
-\tfrac{1}{4}P_+((u+\zeta)Ge^{\frac{i}{2}F})+\tfrac{i}{2}P_+(ge^{\frac{i}{2}F}) +\partial_xP_+(W\cdot P_-\partial_x(u+\zeta)).
\end{aligned}
\end{equation}
The first two terms in the RHS of (\ref{Problem-Gauge}) behaves well, while the last could be addressed by a bilinear estimate in Bourgain spaces. As a consequence, we have the following result.

\begin{proposition}\label{Prop-GWP}
Let $T>0,s\in[0,1]$ and $\zeta\in C^\infty(\T)\cap L^2_0$. Then for every $u_0\in H^s_0$ and $g\in L^2(0,T;H^s_0)$, problem \eqref{Control-Problem-0} admits a  solution $u$ that is unique in the class
$$
u\in X^s_T\quad\text{with }w=\partial_xP_+e^{\frac{i}{2}F}\in Y^s_T.
$$
Moreover, the solution map $$H^s_0\times L^2(0,T;H^s_0)\ni (u_0,g)\mapsto u\in C([0,T];H^s_0)$$ is Lipschitz continuous on bounded sets. 
\end{proposition}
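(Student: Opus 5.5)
The plan is to follow Molinet's approach to $L^2$ well-posedness of the periodic BO equation, based on Tao's gauge transform, and to treat $\zeta$ and $g$ as smooth or $L^2_t$ perturbations. (The statement says "problem \eqref{Control-Problem-0}", but I read it as the generalized problem \eqref{generalized-problem}.) I would split the argument into four stages: local existence with a priori bounds for smooth data, uniqueness and continuity through the gauge variable, globalization by an energy estimate, and Lipschitz dependence on bounded sets.

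\textbf{Step 1 (smooth solutions and a priori estimates).} For $u_0\in H^\infty_0$ and smooth $g$, a classical energy method gives local smooth solutions. Mean zero is preserved because the equation is in conservation form and $[g]=0$. I would then write $w$ through \eqref{Problem-Gauge} as a Duhamel integral of the linear Schr\"odinger group $e^{it\partial_x^2}$, corrected by the phase $\exp(\frac{i}{4}\int_0^tP_0((u+\zeta)^2))$.

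The right-hand side of \eqref{Problem-Gauge} is handled term by term in $Y^s_T$:
\begin{itemize}
\item the terms $P_+((u+\zeta)Ge^{\frac{i}{2}F})$ and $P_+(ge^{\frac{i}{2}F})$ are controlled by $\|g\|_{L^2_TH^s}$, $\|u\|_{L^\infty_TH^s}$ and $\|\zeta\|_{C^k}$, using $|e^{\frac{i}{2}F}|=1$ and the fractional Leibniz rule for $s\le1$;
\item the term $\partial_xP_+(W\,P_-\partial_x(u+\zeta))$ is handled by Molinet's bilinear estimate in $X^{s,b}$. Its key feature is the frequency restriction $P_+(\cdot\,P_-\cdot)$, which removes the bad high–low interaction.
\end{itemize}
To recover $u$ from $w$, I would use
$$
u+\zeta=-2i\,e^{-\frac{i}{2}F}w+\text{(low-frequency terms)}+\text{c.c.},
$$
which yields bounds on $u$ in $X^s_T$: the $L^4_TW^{s,4}_x$ part comes from the $L^4$ Strichartz estimate on $\T$ (Bourgain's $X^{0,3/8}\subset L^4$), and the $X^{s-1,1}_T$ part comes directly from the equation.

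\textbf{Step 2 (uniqueness and continuity).} I would estimate the difference of two solutions through their gauge transforms. The difference $e^{\frac{i}{2}F_1}-e^{\frac{i}{2}F_2}$ is controlled by $\|F_1-F_2\|_{L^\infty}\lesssim\|u_1-u_2\|_{H^{-1}}$. This is where the constraint $w\in Y^s_T$ enters. The result is a Lipschitz bound on a short interval $[0,T_0]$ for $(u_0,g)\mapsto u$, with $T_0$ depending only on the sizes $\|u_0\|_s$, $\|g\|_{L^2_TH^s}$ and $\zeta$. Passing to the limit from smooth data then gives existence for $H^s_0$ data, together with uniqueness in the stated class.

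\textbf{Step 3 (global bound).} Taking the $L^2$ inner product of the equation with $u$ gives
$$
\tfrac{d}{dt}\|u\|^2\le C(\zeta)(1+\|u\|^2)+\|g\|^2 .
$$
This works because $\langle u\partial_xu,u\rangle=0$ and $\mathcal H\partial_x^2$ is skew. The cross terms $\langle\zeta\partial_xu,u\rangle=-\frac12\langle\zeta'u,u\rangle$ and $\langle u\partial_x\zeta,u\rangle$ are bounded by $\|\zeta\|_{C^1}\|u\|^2$, and the remaining terms involving only $\zeta$ are harmless. Gronwall then bounds $\|u(t)\|$ on $[0,T]$.

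For $s\in(0,1]$, I would not try an $H^s$ energy estimate. Instead, the local existence time depends only on $L^2$-level quantities, with $H^s$ regularity persisting: the $H^s$ norm grows at most exponentially with a rate fixed by the $L^2$ bound. Iterating the local result therefore gives a global solution. Lipschitz continuity on bounded sets over the whole of $[0,T]$ follows by composing finitely many short-time Lipschitz maps.

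\textbf{Main obstacle.} I expect the difficult step to be the bilinear estimate for $\partial_xP_+(W\,P_-\partial_x(u+\zeta))$ in $Y^s$, and the difference estimate at $s=0$ in Step 2, where the gauge destroys the linear structure. For both I would invoke Molinet's estimates, and for $s=0$ the refinement by Molinet and Pilod \cite{MP-12}. Adding $\zeta$ and $g$ only produces smooth or lower-order perturbations of their arguments.
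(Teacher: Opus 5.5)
Your framework is the paper's: Tao's gauge, the bilinear estimate of \cite{MP-12}, reconstruction of $u$ from $w$, $X^{0,3/8}\subset L^4$, the $X^{s-1,1}$ bound from the equation, and globalization through the $L^2$ bound. The gap is that for $s\in(0,1]$ the step carrying the real content is only asserted, and it conflicts with your own Step~2, where $T_0$ depends on $\|u_0\|_s$. You rightly avoid an $H^s$ energy estimate, but then no a priori $H^s$ bound is available for the forced problem. A local theory with $T_0=T_0(\|u_0\|_s)$ therefore cannot be iterated up to time $T$, and your global Lipschitz claim (composing short-time maps) needs the same uniformity.

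What has to be proved is that on $[0,\delta]$ the quantity $\|u\|_{X^s_\delta}+\|w\|_{Y^s_\delta}$ satisfies an estimate that is \emph{linear} in itself. The coefficients must depend only on $\|u\|_{X^0_T}$ (known globally from the $L^2$ theory) and $\|g\|_{L^2_TH^s_x}$, with small factors in front of the $H^s$-level terms on the right. Two ingredients for this are missing from your plan.

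First, the bilinear estimate must be used in the tame form, with right-hand side $\delta^{1/8-}\|w\|_{X^{s,1/2}_\delta}\|u\|_{X^0_\delta}$. This is possible because in $P_+(W\cdot P_-\partial_xu)$ the frequency of $W$ exceeds the output frequency, so all $s$ derivatives fall on $w$.

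Second, the remainder in the reconstruction is not a ``low-frequency term''. One has $u=-2iwe^{-\frac{i}{2}F}+e^{-\frac{i}{2}F}P_{\le 0}(ue^{\frac{i}{2}F})$, and the second piece contains $u$ at all frequencies. Its $H^s$ and $W^{s,4}$ norms therefore feed $\|u\|_{H^s}$ and $\|u\|_{W^{s,4}}$ back into the estimate. The paper handles this as follows:
\begin{itemize}
\item Applying $P_{\ge N}$ forces $e^{-\frac{i}{2}F}$, which is bounded in $H^1$ by $C(1+\|u\|)$, to frequencies $\ge N$, producing a factor $N^{-1/4+}$.
\item The modes $|k|<N$ are estimated directly by Duhamel, at a cost $C(N)\delta^{1/2}\|u\|_{L^4_\delta L^4_x}^2$.
\item Choosing $N$ large and then $\delta$ small, both depending only on $\|u\|_{X^0_T}$ and $\|g\|_{L^2_TH^s_x}$, closes the estimate.
\end{itemize}
Iterating over the $T/\delta$ intervals then gives exactly the exponential $H^s$ bound you state in Step~3. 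Without this argument, Step~3 and the global Lipschitz statement are unsupported for $s>0$.

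Two smaller points.
\begin{itemize}
\item The inequality $\|F_1-F_2\|_{L^\infty}\lesssim\|u_1-u_2\|_{H^{-1}}$ is false, since $\|u_1-u_2\|_{H^{-1}}$ only controls $\|F_1-F_2\|_{L^2}$. One needs $\|u_1-u_2\|_{H^{-1/2+}}$; this is harmless here because $\|u_1-u_2\|$ is available.
\item Much of your work at the $L^2$ level can be bypassed. Since $\zeta$ does not depend on $t$, $\tilde u=u+\zeta$ solves the forced equation with $\zeta=0$ and datum $u_0+\zeta$. The case $s=0$ (existence, uniqueness in the class, Lipschitz dependence on bounded sets, global $X^0_T$ bound) is \cite[Theorem~1.3]{LLR-15}. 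The paper takes both shortcuts, so only the persistence argument above has to be supplied.
\end{itemize}
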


\begin{remark}\label{changevariable-2}
The continuity of solution map could be extended to the mass-conserved setting. More precisely, the solution map $(u_0,g)\mapsto u$ is locally Lipschitz from $H^s_c\times L^2(0,T;H^s_0)$ to $C([0,T];H^s_c)$. This can be done by also invoking the change of variable $\tilde u(t,x)=u(t,x-t[u_0])$ {\rm(}as in Remark~{\rm\ref{changevariable-1})}
\end{remark}

\begin{proof}[\bf Proof of Proposition \ref{Prop-GWP}]
We begin with a simple reduction. By using the change of variable $\tilde u=u+\zeta$, one can take $\zeta\equiv 0$ without loss of generality. Moreover, the issue for $s=0$ has been solved by \cite[Theorem 1.3]{LLR-15}. We proceed to consider the case of $0<s\leq 1/4$.

The following a priori estimates for system (\ref{generalized-problem}),(\ref{Problem-Gauge}) in the space $X^s_T\times Y^s_T$ are fundamental to the proof; see Steps 1-3 below. 
By the well-posedness result in $L^2$, one gets that problem \eqref{Control-Problem-0} admits a solution $u\in X_T^0$.

{\it Step 1: $Y^s$-estimate of $w$}. Let $0<\delta<T$.
Making use of the inequality $$
\begin{aligned}
&\|e^{\frac{i}{2}F}-1\|_{1}\leq C\|u\|,\\
&\|f\|_{_{X_\delta^{s,-1/2}}}+\|f\|_{_{ Z^{s,-1}_\delta}}\leq C \delta^{1/2-}\|f\|_{_{L^2_\delta H^s_x}},\\
&\|f_1f_2\|_{s}\leq C\|f_1\|_{1}\|f_2\|_{s},
\end{aligned}
$$
we first obtain that 
\begin{equation}\label{estimate-1}
\begin{aligned}
&\|P_+(uGe^{\frac{i}{2}F})\|_{_{X_\delta^{s,-1/2}}}+\|P_+(uGe^{\frac{i}{2}F})\|_{_{ Z^{s,-1}_\delta}}\\
&\leq C \delta^{1/2-}\|u\|_{_{L^\infty_\delta H^s_x}}\left[1+\|u\|_{_{L^\infty_\delta L^2_x}}\right]\left[\|u\|_{_{L^\infty_\delta L^2_x}}+\|g\|_{_{L^2_\delta H^s_x}}\right].
\end{aligned}
\end{equation}
Similarly, one can deduce that
\begin{equation}\label{estimate-2}
\|ge^{-\frac{i}{2}F}\|_{_{X_\delta^{s,-1/2}}}+\|ge^{-\frac{i}{2}F}\|_{_{ Z^{s,-1}_\delta}}\leq C \delta^{1/2-}\left[1+\|u\|_{_{L^\infty_\delta L^2_x}}\right]
\|g\|_{_{L_\delta^2H^s_x}}.
\end{equation}
For the last term in the RHS of (\ref{Problem-Gauge}), we invoke a bilinear estimate established in \cite[Proposition 3.5]{MP-12} and infer that
\begin{equation}\label{estimate-3}
\|\partial_xP_+(W\cdot P_-\partial_xu)\|_{_{X_\delta^{s,-1/2}}}+\|\partial_xP_+(W\cdot P_-\partial_xu)\|_{_{ Z^{s,-1}_\delta}}\leq C \delta^{1/8-}\|w\|_{_{X_\delta^{s,1/2}}}\|u\|_{_{X^0_\delta}}.
\end{equation}
We substitute (\ref{estimate-1})-(\ref{estimate-3}) and 
$$
\left\|\int_0^t e^{i\partial_{x}^2(t-\tau)}f(\tau)d\tau\right\|_{_{Y^s_\delta}}\leq C\left[\|f\|_{_{X_\delta^{s,-1/2}}}+\|f\|_{_{Z^{s,-1}_\delta}}
\right]\quad \text{with }P_+f=f
$$ 
into the Duhamel formula for (\ref{Problem-Gauge}). It then follows that
\begin{equation}\label{priori-1}
\begin{aligned}
\|w\|_{_{Y^s_\delta}}&\leq C(\|u\|_{_{X^0_T}})\left[
\|w_0\|_{_{s}}+\|g\|_{_{L^2_TH^s_x}}
\right]\\
&\quad+C(\|u\|_{_{X^0_T}},\|g\|_{_{L^2_TH^s_x}})\delta^{1/8-}\left[\|u\|_{_{L^\infty_\delta H^s_x}}+\|w\|_{_{X_\delta^{s,1/2}}}\right].
\end{aligned}
\end{equation}

{\it Step 2: $X^{s-1,1}$-estimate of $u$.} Let us invoke a recent result on fractional Leibniz rule (see \cite[Proposition 1]{BOZ-25}):
\begin{equation}\label{estimate-Leibnizrule}
\|J^s(f_1f_2)\|_{_{L^r}}\leq C \left(\|J^sf_1\|_{_{L^{p_1}}}\|f_2\|_{_{L^{q_1}}}+\|f_1\|_{_{L^{p_2}}}\|J^sf_2\|_{_{L^{q_2}}}\right)\footnote{The authors in \cite{BOZ-25} also considered the endpoint cases of $L^\infty$ ($p_j=q_j=r=\infty$) and $L^1$ ($p_1=p_2=1,1\leq q_1=q_2\leq \infty$ and $\frac{1}{2}\leq r\leq 1$).},
\end{equation}
where $1<p_j,q_j\leq \infty\,(j=1,2)$ and $\frac{1}{2}< r<\infty$ such that $\frac{1}{r}=\frac{1}{p_j}+\frac{1}{q_j}$.
Define $\eta_T\in C_0^{\infty}(\R)$ with $\eta_T(t)=1$ for $t\in[-T,T]$ and $\eta_T(t)=0$ for $|t|\geq 2T$. To continue, we construct an appropriate extension of $u$. Let $v(t)=S(-t)u(t)$ on $[0,\delta]$ and $\partial_t v(t)=0$ for $t\in[-2T,2T]\setminus[0,\delta]$. Then $\tilde u(t):=\eta_T(t)S(t)v(t)$ satisfies $\tilde u(t)=u(t)$ for $t\in[0,\delta]$ and 
\begin{equation*}
\begin{aligned}
\|\tilde u\|_{_{X^{s-1,1}}}&\leq C\left[
\|\partial_tv\|_{_{L^2_{[-2T,2T]}H^{s-1}_x}}+\|v\|_{_{L^2_{[-2T,2T]}H^{s-1}_x}}
\right]\\
&\leq C\left[
\|\partial_tv\|_{_{L^2_{\delta}H^{s-1}_x}}+\|v\|_{_{L^2_{\delta}H^{s-1}_x}}
\right].
\end{aligned}
\end{equation*}
Moreover, one has 
\begin{equation}\label{estimate-4}
\|\partial_tv\|_{_{L^2_{\delta}H^{s-1}_x}}=\|S(-t)(\mathcal H\partial_x^2u+\partial_tu)\|_{_{L^2_{\delta}H^{s-1}_x}}= \|-\tfrac{1}{2}\partial_x(u^2)+g\|_{_{L^2_{\delta}H^{s-1}_x}}.
\end{equation}
From (\ref{estimate-Leibnizrule}) we derive that
$$
\|\partial_x(u^2)\|_{s-1}=\|u^2\|_{s}=\|J^s(u^2)\|\leq C \|J^su\|_{_{L^{4}}}\|u\|_{_{L^{4}}}.
$$
This indicates that
\begin{equation}\label{estimate-5}
\|\partial_x(u^2)\|_{_{L^2_\delta H^{s-1}_x}}\leq C \|u\|_{_{L^4_\delta W^{s,4}_x}}\|u\|_{_{L^4_\delta L^{4}_x}}.
\end{equation}
On the other hand, the Duhamel formula for (\ref{generalized-problem}) indicates that
\begin{equation}\label{estimate-6}
\|v\|_{_{L^2_\delta H^{s-1}_x}}=\|u\|_{_{L^2_\delta H^{s-1}_x}}\leq C\left[ \|u_0\|_{s}+\|g\|_{_{L^2_\delta H^{s}_x}}+\|\partial_x(u^2)\|_{_{L^2_\delta H^{s-1}_x}}\right].
\end{equation}
Summarizing (\ref{estimate-4})-(\ref{estimate-6}), we conclude that
\begin{equation}\label{priori-2}
\|u\|_{_{X^{s-1,1}_\delta}}\leq \|\tilde u\|_{_{X^{s-1,1}}}\leq 
C\left[ \|u_0\|_{s}+\|g\|_{_{L^2_T H^{s}_x}}\right]+C(\|u\|_{_{X^0_T}})\|u\|_{_{L^4_\delta  W^{s,4}_x}}.
\end{equation}

{\it Step 3: $L^\infty H^s$- and $L^4W^{s,4}$-estimates of u}. In this step, we shall adopt a technique of frequency analysis. 
Let $N\in\N^+$ to be specified later.
We first observe that 
$$
\|P_Nu\|_{_{L^4_\delta W^{s,4}_x}}\leq  \delta^{1/4}\|P_Nu\|_{_{L^\infty_\delta W^{s,4}_x}}\leq C(N)\delta^{1/4}\|P_Nu\|_{_{L^\infty_\delta H^s_x}},
$$
which indicates that it suffices to estimate $\|P_Nu\|_{_{L^\infty_TH^s_x}}$.
To this end,
it is easy to check that $u_N:=P_Nu$ satisfies
\begin{equation*}
\partial_tu_N+\mathcal H\partial_x^2u_N+\tfrac{1}{2}P_N(\partial_x(u^2))=P_Ng.
\end{equation*} 
Using the Duhamel formula, we obtain that
\begin{equation*}
\|u_N\|_{_{L^\infty_\delta H^s_x}}\leq  C\left[\|u_0\|_{s}+\|g\|_{_{L^2_T H^{s}_x}}\right]+C(N)\delta^{1/2}\|u\|_{_{L^4_\delta L^4_x}}^2.
\end{equation*}
As a consequence, we conclude the following estimate for the low frequency:
\begin{equation}\label{estimate-LF}
\begin{aligned}
\|P_Nu\|_{_{L^\infty_\delta H^s_x}}+\|P_Nu\|_{_{L^4_\delta W^{s,4}_x}}\leq C(N)\left[\|u_0\|_{s}+\|g\|_{_{L^2_\delta H^{s}_x}}\right]+C(N)\delta^{1/2}\|u\|_{_{L^4_\delta L^4_x}}^2.
\end{aligned}
\end{equation}

To deal with the high frequency, we decompose  $u=-2iwe^{-\frac{i}{2}F}+e^{-\frac{i}{2}F}P_{\leq 0}(ue^{\frac{i}{2}F})$, which means
\begin{equation*}
\begin{aligned}
P_{\geq N}u&=-2iP_{\geq N}(we^{-\frac{i}{2}F})+P_{\geq N}(P_{\geq N}(e^{-\frac{i}{2}F})\cdot P_{\leq 0}(ue^{\frac{i}{2}F}))\\
&=:A_N+B_N.
\end{aligned}
\end{equation*}
For the first term, it can be derived that 
\begin{equation*}
\|A_N\|_{_{L^\infty_\delta H^s_x}}\leq C \|we^{-\frac{i}{2}F}\|_{_{L^\infty_\delta H^s_x}}\leq C \|w\|_{_{L^\infty_\delta H^s_x}}\left[1+\|u\|_{_{L^\infty_\delta L^2_x}}\right].
\end{equation*}
In addition, since $0<s\leq 1/4$ one has $H^{1-s}(\T)\hookrightarrow L^\infty(\T)$. Thus,
an application of (\ref{estimate-Leibnizrule}) yields that
\begin{equation*}
\begin{aligned}
\|A_N\|_{_{W^{s,4}}}&\leq C\left[ \|J^se^{\frac{i}{2}F}\|_{_{L^\infty}}\|w\|_{_{L^4}}+\|e^{\frac{i}{2}F}\|_{_{L^\infty}}\|J^sw\|_{_{L^4}}\right]\\
&\leq  C\left[1+\|Je^{\frac{i}{2}F}\|\right]\|J^sw\|_{_{L^4}}\leq C \left[1+\|u\|\right]\|J^sw\|_{_{L^4}}.
\end{aligned}
\end{equation*}
This implies that
\begin{equation*}
\|A_N\|_{_{L^4_\delta W^{s,4}_x}}\leq C \left[1+\|u\|_{_{L^\infty_\delta L^2_x}}\right]\|J^sw\|_{_{L^4_\delta L^4_x}}.
\end{equation*}
In conclusion, 
\begin{equation}\label{estimate-AN}
\|A_N\|_{_{L^\infty_\delta H^s_x}}+\|A_N\|_{_{L^4_\delta W^{s,4}_x}}\leq C \left[1+\|u\|_{_{L^\infty_\delta L^2_x}}\right]\|w\|_{_{Y^s_T}},
\end{equation}
where we have tacitly used $X^{0,3/8}_\delta\hookrightarrow L^4_{\delta}L^4_x$ and $Y^s_\delta\hookrightarrow L^\infty_\delta H^s_x$.

For the second term, we observe that
\begin{equation*}
\begin{aligned}
\|B_N\|_{_{L^\infty_\delta H^s_x}}&\leq  \|P_{\geq N}(e^{-\frac{i}{2}F})\cdot P_{\leq 0}(ue^{\frac{i}{2}F})\|_{_{L^\infty_\delta H^s_x}}\\
&\leq C \|P_{\geq N}(e^{-\frac{i}{2}F})\|_{_{L^\infty_\delta H^{1/2+}_x}}\|P_{\leq 0}(ue^{\frac{i}{2}F})\|_{_{L^\infty_\delta H^{s}_x}}\\
&\leq  \frac{C}{N^{1/2-}}\|e^{-\frac{i}{2}F}\|_{_{L^\infty_\delta H^{1}_x}}\|ue^{\frac{i}{2}F}\|_{_{L^\infty_\delta H^{s}_x}}\\
&\leq \frac{C}{N^{1/2-}} \left(1+\|u\|_{_{L^\infty_\delta L^2_x}}\right)^2\|u\|_{_{L^\infty_\delta H^s_x}}.
\end{aligned}
\end{equation*}
It also follows from (\ref{estimate-Leibnizrule}) that
\begin{align*}
\|B_N\|_{_{W^{s,4}}}&\leq C\left[ \|J^sP_{\geq N}(e^{-\frac{i}{2}F})\|_{_{L^\infty}}\|ue^{\frac{i}{2}F}\|_{_{L^4}}+\|P_{\geq N}(e^{-\frac{i}{2}F})\|_{_{L^\infty}}\|J^s(ue^{\frac{i}{2}F})\|_{_{L^4}}\right]\\
&\leq C\left[ \|J^{s+1/2+}P_{\geq N}(e^{-\frac{i}{2}F})\|\|ue^{\frac{i}{2}F}\|_{_{L^4}}+\|J^{1/2+}P_{\geq N}(e^{-\frac{i}{2}F})\|_{_{L^2}}\|J^s(ue^{\frac{i}{2}F})\|_{_{L^4}}\right]\\
&\leq \frac{C}{N^{1/4-}}\left(1+\|u\|\right)\|J^s(ue^{\frac{i}{2}F})\|_{_{L^4}}.
\end{align*}
In particular, 
$$
\|J^s(ue^{-\frac{i}{2}F})\|_{_{L^4}}\leq \left(1+\|u\|_{_{L^2}}\right)\|u\|_{_{W^{s,4}}}.
$$
Accordingly,
\begin{equation*}
\begin{aligned}
\|B_N\|_{_{L^\infty_\delta H^s_x}}+\|B_N\|_{_{L^4_\delta W^{s,4}_x}}\leq \frac{C}{N^{1/4-\varepsilon}}\left(1+\|u\|_{_{L^\infty_\delta L^2_x}}\right)^2\left(\|u\|_{_{L^\infty_\delta H^s_x}}+\|u\|_{_{L^4_\delta W^{s,4}_x}}\right).
\end{aligned}
\end{equation*}
Combined with (\ref{estimate-AN}), this indicates that
\begin{equation}\label{estimate-HF}
\|P_{\geq N}u\|_{_{L^\infty_\delta H^s_x}}+\|P_{\geq N}u\|_{_{L^4_\delta W^{s,4}_x}}\leq  C(\|u\|_{_{X^0_T}})\left[\|w\|_{_{Y^s_\delta}}+\frac{1}{N^{1/4-\varepsilon}}\|u\|_{_{X^s_\delta}}\right].
\end{equation}
Finally, by (\ref{estimate-LF}) and (\ref{estimate-HF}) we conclude that
\begin{equation}\label{priori-3}
\begin{aligned}
\|u\|_{_{L^\infty_\delta H^s_x}}+\|u\|_{_{L^4_\delta W^{s,4}_x}}&\leq C(N)\left[\|u_0\|_{s}+\|g\|_{_{L^2_T H^{s}_x}}\right]
+C(N,\|u\|_{_{X^0_T}})\delta^{1/2}\|u\|_{_{L^4_\delta L^4_x}}\\
&\quad 
+
C(\|u\|_{_{X^0_T}})\left[\|w\|_{_{Y^s_\delta}}+\frac{1}{N^{1/4-\varepsilon}}\|u\|_{_{X^s_\delta}}\right].
\end{aligned}
\end{equation}

{\it Step 4: completing the proof.} Putting (\ref{priori-1}),(\ref{priori-2}) and (\ref{priori-3}) all together, it follows that
\begin{equation*}
\begin{aligned}
\|u\|_{_{X^s_\delta}}+\|w\|_{_{Y^s_\delta}}&\leq
C(N,\|u\|_{_{X^0_T}})\left[
\|u_0\|_{_{s}}+\|g\|_{_{L^2_TH^s_x}}
\right]
+C(N,\|u\|_{_{X^0_T}})\delta^{1/2}\|u\|_{_{L^4_\delta W^{s,4}_x}}\\
& \quad +
C(\|u\|_{_{X^0_T}},\|g\|_{_{L^2_TH^s_x}})\left[\delta^{1/8-}\left(\|u\|_{_{X^s_\delta}}+\|w\|_{_{X_\delta^{s,1/2}}}\right)+\frac{1}{N^{1/4-\varepsilon}}\|u\|_{_{X^s_\delta}}\right].
\end{aligned}
\end{equation*}
As a consequence, one can choose a  sufficiently large $N$ and a small $\delta$, depending only on $\|u\|_{_{X^0_T}}$ and $\|g\|_{_{L^2_TH^s_x}}$, so  that
$$
\|u\|_{_{X^s_\delta}}+\|w\|_{_{Y^s_\delta}}\leq C(N,\|u\|_{_{X^0_T}})\left[
\|u_0\|_{_{s}}+\|g\|_{_{L^2_TH^s_x}}
\right].
$$
One can repeat the above procedure on the time intervals $[k\delta,(k+1)\delta]\,(k\geq 1)$ up to $[0,T]$; notice that the length $\delta$ is independent of $\|u_0\|_{_{s}}$. In conclusion,
$$
\|u\|_{_{X^s_T}}+\|w\|_{_{Y^s_T}}\leq C(\|u_0\|_{_{s}},\|g\|_{_{L^2_TH^s_x}})\left[
\|u_0\|_{_{s}}+\|g\|_{_{L^2_TH^s_x}}
\right],
$$
leading to the global existence and uniqueness of solution in $H^s$. The Lipschitz continuity of solution map follows by adapting trivially (\ref{priori-1}),(\ref{priori-2}) and (\ref{priori-3}) to Lipschitz-type estimates.

For the case of $1/4<s\leq 1$, the arguments of Steps 1 and 2 are also valid. As for Step~3, some modifications are needed in the high-frequency estimates.
Using (\ref{estimate-Leibnizrule}) again yields 
$$
\begin{aligned}
\|J(we^{-\frac{i}{2}F})\|_{_{L^4}}&\leq C\left[ \|Jw\|_{_{L^4}}\|e^{-\frac{i}{2}F}\|_{_{L^\infty}}+\|w\|_{_{L^\infty}}\|Je^{-\frac{i}{2}F}\|_{_{L^4}}\right]\\
& \leq C \left[1+\|u\|_{_{L^4}}\right]\|Jw\|_{_{L^4}}.
\end{aligned}
$$
Interpolating this with the obvious inequality $\|we^{-\frac{i}{2}F}\|_{_{L^4}}\leq C  (1+\|u\|_{_{L^2}})\|w\|_{_{L^4}}$, we conclude that
$
\|we^{-\frac{i}{2}F}\|_{_{W^{s,4}}}\leq C [1+\|u\|_{_{L^4}}]\|J^sw\|_{_{L^4}}.
$
This combined with the embedding $H^{1/4}\hookrightarrow L^4$ implies
$$
\|A_N\|_{_{L^4_\delta W^{s,4}_x}}\leq C \left[1+\|u\|_{_{L^\infty_T H^{1/4}_x}}\right]\|w\|_{_{X^{s,3/8}_\delta}}.
$$
Similar ideas could be applied to deal with $B_N$. The proof is then complete.
\end{proof}

\section{Approximate controllability}\label{Section-3}

In the remainder of the paper, we denote by $\mathcal R(u_0,\zeta,g)$
the solution map of (\ref{generalized-problem}), and $\mathcal R_t(u_0,\zeta,g)$ stands for the restriction at time $t$. For simplicity we write $\mathcal R(u_0,g)=\mathcal R(u_0,0,g)$ (that is associated with the original BO equation).

The following asymptotic property for the generalized equation (\ref{generalized-problem}) is basic to our proof of Theorem~\ref{Main-theorem-1}.

\begin{lemma}\label{Lemma-asymptotic}
Let $u_0\in L^2_0(\T)$ and $\eta,\zeta\in C^\infty(\T)\cap L^2_0$. Then there holds
$$
\lim_{\delta\rightarrow 0^+}\mathcal R_\delta(u_0,\delta^{-1/2}\zeta,\delta^{-1}\eta)=u_0+\eta-\zeta\partial_x\zeta
$$
in the scale of $L^2$.
\end{lemma}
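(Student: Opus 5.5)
Write $u=\mathcal R(u_0,\delta^{-1/2}\zeta,\delta^{-1}\eta)$ on $[0,\delta]$. We compare $u$ with the explicit candidate
$$
\tilde u(t)=u_0+\tfrac{t}{\delta}\bigl(\eta-\zeta\partial_x\zeta\bigr),
$$
and set $v=u-\tilde u$. Note that in \eqref{generalized-problem} the term $\zeta\partial_x\zeta$ carries the factor $\delta^{-1}$, exactly like $\eta$. This is why it survives in the limit.

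\textbf{Step 1: reduction to smooth data.} By density we may take $u_0\in H^1_0$ (even $C^\infty$). We then need an $L^2$-stability estimate for the maps $\mathcal R_\delta(\cdot,\delta^{-1/2}\zeta,\delta^{-1}\eta)$ that is uniform in $\delta\in(0,1]$. Proposition~\ref{Prop-GWP} gives Lipschitz continuity, but its constants depend on $\zeta$ and $g$, which are here large. So uniformity has to come from a different route. We shall instead obtain uniform bounds through an energy argument in the variable $v$. Since the estimates below depend on $u_0$ only through its norm, the density step reduces to the uniform-in-$\delta$ continuity of $u_0\mapsto v$. The cleanest way to get this is to apply the Step~2 bound to the difference of two solutions.

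\textbf{Step 2: equation and energy estimate for $v$.} Expand $(u+\delta^{-1/2}\zeta)\partial_x(u+\delta^{-1/2}\zeta)$. The $\delta^{-1}\zeta\partial_x\zeta$ term cancels against $\partial_t\tilde u$, and the $\delta^{-1}\eta$ term is removed as well. We obtain
$$
\partial_t v+\mathcal H\partial_x^2 v+v\partial_xv+\partial_x(v\tilde u)+\delta^{-1/2}\partial_x(\zeta v)=-\mathcal H\partial_x^2(\tilde u+\delta^{-1/2}\zeta)-\tilde u\partial_x\tilde u-\delta^{-1/2}\partial_x(\zeta\tilde u),
$$
with $v(0)=0$. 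Take the $L^2$ inner product with $v$. The dispersive term and $v\partial_xv$ vanish. We have
$$
\bigl(\partial_x(\zeta v),v\bigr)=\tfrac12(\partial_x\zeta\, v,v)\le C\|v\|^2,
$$
which contributes $C\delta^{-1/2}\|v\|^2$. The term $(\partial_x(v\tilde u),v)$ is bounded by $C\|\partial_x\tilde u\|_{L^\infty}\|v\|^2$. The forcing terms are all $O(\delta^{-1/2})$ in $L^2$, provided $u_0\in H^2$; if $u_0$ is less regular, we mollify it first and use $\mathcal H\partial_x^2$ with smooth data. Gronwall on $[0,\delta]$ then gives
$$
\|v(\delta)\|\le C\delta^{-1/2}\cdot\delta\cdot e^{C\delta^{1/2}}\to0 .
$$
The energy identity needs justification at the $L^2$ level, via the regularity of Proposition~\ref{Prop-GWP} with $s=1$ and smooth data.

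\textbf{Main obstacle.} The hard part is the passage back to general $u_0\in L^2_0$, i.e.\ uniform stability. The difference $w=u^1-u^2$ of two solutions satisfies
$$
\partial_tw+\mathcal H\partial_x^2w+\partial_x\bigl(w(u^1+u^2)\bigr)/2+\delta^{-1/2}\partial_x(\zeta w)=0.
$$
Here the term $(w\partial_x u^{j},w)$ is not controlled in $L^2$ alone. I would first try a cruder route. Fix a smooth $u_0^\epsilon$ close to $u_0$. Use the bound of Step~2, which shows the $H^1$ norm of $u^\epsilon$ stays bounded uniformly on $[0,\delta]$. This is possible because the forcing is smooth and $\delta^{-1}\times\delta$ is $O(1)$, via the $H^1$ energy of $v$. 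Then estimate $w$ in $L^2$ using
$$
|(w\partial_xu^\epsilon,w)|\le\|\partial_xu^\epsilon\|_{L^\infty}\|w\|^2 ,
$$
after writing the nonlinearity as $w\partial_x u^\epsilon+w\partial_xw$. The only problematic factor is $\|\partial_xu^\epsilon\|_{L^\infty}$; it is controlled by taking $u_0^\epsilon\in H^3$ and propagating $H^3$ bounds of $v$ over the short interval. This gives $\|w(\delta)\|\le e^{C_\epsilon\delta^{1/2}}\|u_0-u_0^\epsilon\|$, hence uniformity as $\delta\to0$ for fixed $\epsilon$.

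If the $H^3$ energy estimates for $v$ fail to close because of the $\delta^{-1/2}\partial_x(\zeta v)$ commutators, I would fall back on Tao's gauge $w=\partial_xP_+e^{\frac i2F}$ and the $Y^s$ bounds of Proposition~\ref{Prop-GWP}, tracking the $\delta$-dependence. Note that the size of $\delta^{-1/2}\zeta$ enters only through $\delta^{1/2}\cdot\delta^{-1/2}$-type products on $[0,\delta]$.
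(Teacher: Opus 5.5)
Your Step 2 matches the paper's argument up to the rescaling $t\mapsto\delta t$: an $L^2$ energy estimate for the gap between $u$ and the explicit profile, where every term linear in the gap carries at most $\delta^{-1/2}$ on an interval of length $\delta$. The gap is in how you handle rough $u_0$. You turn it into a uniform-in-$\delta$ stability problem for the solution map, and you leave that problem unresolved, offering two candidate routes. The missing idea is that smoothness is needed only for the reference profile, not for $u$. In your computation $u_0$ enters only through $\tilde u$: via $\|\partial_x\tilde u\|_{L^\infty}$ and via the forcing $-\mathcal H\partial_x^2\tilde u-\tilde u\partial_x\tilde u-\delta^{-1/2}\partial_x(\zeta\tilde u)$. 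So take
\[
\tilde u(t)=P_Nu_0+\tfrac{t}{\delta}\,(\eta-\zeta\partial_x\zeta),
\]
and keep $v=u-\tilde u$, now with $v(0)=(I-P_N)u_0$. The same computation gives $\frac{d}{dt}\|v\|^2\le C(N)\delta^{-1/2}(1+\|v\|^2)$, hence
\[
\|v(\delta)\|^2\le e^{C(N)\delta^{1/2}}\|(I-P_N)u_0\|^2+e^{C(N)\delta^{1/2}}-1 .
\]
Choosing $N$ first and then $\delta$ finishes the proof. This is exactly what the paper does with $w_N=v^\delta-P_Nv^0$ and $q_N$. It needs no density step, no stability uniform in $\delta$, and no regularity beyond Proposition~\ref{Prop-GWP}. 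The only remaining point is to justify the $L^2$ energy inequality for an $L^2$ solution. You can do this at fixed $\delta$ by approximating $u_0$ and using the continuity in Proposition~\ref{Prop-GWP}; constants depending on $\delta$ are harmless there.

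Your own route can be closed, but it costs far more, and one claim in it is inaccurate. Step 2 is an $L^2$ estimate and does not bound the $H^1$ norm of $u^\epsilon$. Nor does an $H^1$ energy estimate for BO close by itself, because of the term $\int(\partial_xv)^3$. The $H^3$ estimate for $v$ does close on $[0,\delta]$ by a bootstrap. After the usual integrations by parts, the $\partial_x(v\tilde u)$ and $\delta^{-1/2}\partial_x(\zeta v)$ terms give $C\delta^{-1/2}\|v\|_{H^3}^2$, the quadratic term gives $C\|\partial_xv\|_{L^\infty}\|v\|_{H^3}^2$, and the forcing is $O(\delta^{-1/2})$. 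Hence $\|v\|_{H^3}\lesssim\delta^{1/2}$ as long as $\|v\|_{H^3}\le1$. This, however, requires $H^3$ well-posedness for the forced equation, which lies outside Proposition~\ref{Prop-GWP} (there $s\le1$). You would also still need the approximation argument above to justify the $L^2$ estimate for $w=u-u^\epsilon$, since $u$ is only an $L^2$ solution. Once you have a smooth $u_0^\epsilon$ at all, comparing $u$ directly with the profile built from $u_0^\epsilon$ makes the whole detour unnecessary.
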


\begin{proof}[\bf Proof]
We begin with noticing that $u:=\mathcal R(u_0,\delta^{-1/2}\zeta,\delta^{-1}\eta)$ satisfies the equation 
\begin{equation}\label{equation-1}
\left\{\begin{array}{ll}
\partial_tu+\mathcal H\partial^2_{x}(u+\delta^{-1/2}\zeta)+(u+\delta^{-1/2}\zeta)\partial_x(u+\delta^{-1/2}\zeta)=\delta^{-1}\eta,\\
u(0)=u_0.
\end{array}
\right.
\end{equation}
Define $v^\delta(t)=u(\delta t)$ for $t\in[0,1]$ and $\delta\in(0,1)$. In view of (\ref{equation-1}), we derive 
the equation for $v^\delta$:
\begin{equation*}
\begin{aligned}
\partial_tv^\delta&=-\delta\left[
\mathcal H\partial^2_{x}(v^\delta+\delta^{-1/2}\zeta)+(v^\delta+\delta^{-1/2}\zeta)\partial_x(v^\delta+\delta^{-1/2}\zeta)-\delta^{-1}\eta
\right]\\
&=-\delta\left[
\mathcal H\partial^2_{x}v^\delta+v^\delta\partial_xv^\delta\right] -\delta^{1/2}\left[\mathcal H\partial^2_{x}\zeta+
\zeta\partial_xv^\delta+v^\delta\partial_x\zeta
\right]+(\eta-\zeta\partial_x\zeta).
\end{aligned}
\end{equation*}
That is, 
$$
\partial_tv^\delta+\delta\mathcal H\partial^2_{x}v^\delta+\delta v^\delta\partial_xv^\delta+\delta^{1/2}\zeta\partial_xv^\delta+\delta^{1/2}v^\delta\partial_x\zeta=-\delta^{1/2}\mathcal H\partial^2_{x}\zeta+(\eta-\zeta\partial_x\zeta).
$$
Taking $\delta=0$ we obtain the ``limit'' equation
$$
\left\{\begin{array}{ll}
\partial_tv^0=\eta-\zeta\partial_x\zeta,\\
v^0(0)=u_0.
\end{array}\right.
$$
In the sequel, we proceed to prove 
the asymptotic property
\begin{equation}\label{limit}
\|v^{\delta}(1)-v^{0}(1)\|\rightarrow 0\quad \text{as }\delta\rightarrow 0^+,
\end{equation}
which combined with the fact $v^0(1)=u_0+\eta-\zeta\partial_x\zeta$ implies the conclusion of this lemma.

For this purpose, we denote 
$$
q_N=\|(I-P_N)u_0\|+\|(I-P_N)(\eta-\zeta\partial_x\zeta)\|
$$
for $N\in\N^+$. Evidently, one has $q_N\rightarrow 0$ as $N\rightarrow\infty$.
Let us consider the difference $w_N:=v^{\delta}-P_Nv^0$, satisfying 
$$
\begin{aligned}
&\partial_tw_N+\delta\mathcal H\partial^2_{x}(w_N+P_Nv^0)+\delta (w_N+P_Nv^0)\partial_x(w_N+P_Nv^0)+\delta^{1/2}\partial_x[\zeta(w_N+P_Nv^0)]\\
&=-\delta^{1/2}\mathcal H\partial^2_{x}\zeta+(I-P_N)(\eta-\zeta\partial_x\zeta).
\end{aligned}
$$
Taking the inner product with $w_N$ in $L^2$, it follows that
$$
\frac{d}{dt}\|w_N\|^2\leq C(C(N)\delta^{1/2}+q_N)(1+\|w_N\|^2),
$$
leading to
\begin{equation*}
\begin{aligned}
\|w_N(1)\|^2&\leq e^{C(C(N)\delta^{1/2}+q_N)}\left[\|w_N(0)\|^2+C(C(N)\delta^{1/2}+q_N)\right]\\
&\leq e^{C(C(N)\delta^{1/2}+q_N)}\left[q_N^2+C(C(N)\delta^{1/2}+q_N)\right],
\end{aligned}
\end{equation*}
by means of the Gronwall inequality.
For every $\varepsilon>0$ one can take $N$ sufficiently large and then $\delta$ sufficiently small, so that
$$
q_N<\frac{\varepsilon}{2},\quad C(C(N)\delta^{1/2}+q_N)<\min\left\{\log 2,\frac{\varepsilon^2}{4}\right\}.
$$
Accordingly, 
\begin{equation}\label{error-1}
\|v^{\delta}(1)-P_Nv^{0}(1)\|^2<\varepsilon^2
\end{equation}
and 
\begin{equation}\label{error-2}
\|(I-P_N)v^{0}(1)\|\leq q_N<\varepsilon.
\end{equation}
In conclusion, the desired property (\ref{limit}) can be derived from (\ref{error-1}) and (\ref{error-2}).
The proof is complete.
\end{proof}

The asymptotic property stated in Lemma \ref{Lemma-asymptotic} means that one could construct a large control in small time interval, 
driving the system along the new directions of the form $$\eta-\zeta\partial_x\zeta.$$ Accordingly, we define an increasing sequence of subspaces $\mathcal H_j\subset L^2_0(\T),j\geq 0$ via 
\begin{equation*}
\begin{aligned}
&\mathcal H_0=\text{span}\{\sin x,\cos x\},\\
&\mathcal H_j=\text{span}\{
\eta-\sum_{l=1}^N\zeta_i\partial_x\zeta_i;\eta,\zeta_i\in \mathcal H_{j-1}
\},\quad j\geq 1.
\end{aligned}
\end{equation*}
Note that $\mathcal H_0$ involves the same Fourier modes of the control input $\eta$ (see (\ref{source-term})). Furthermore, a saturating property, saying that the vector space $L_0^2$ can be ``filled'' with those directions in $\mathcal H_j,j\geq 1$, is necessary for establishing the global approximate controllability in small time.

\begin{lemma}\label{Lemma-Saturating}
The subspace $\mathcal H_0$ is saturating in the sense that $\bigcup_{j=1}^\infty\mathcal H_j$ is dense in $L^2_0$.
\end{lemma}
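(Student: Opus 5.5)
The plan is to show by induction that each $\mathcal H_j$ contains every trigonometric mode of frequency at most $j+1$:
$$
\mathcal H_j\supseteq \mathrm{span}\{\sin kx,\cos kx;\ 1\le k\le j+1\},\qquad j\geq 0.
$$
Once this holds, $\bigcup_j\mathcal H_j$ contains all mean-zero trigonometric polynomials, which are dense in $L^2_0$. Since every element of $\mathcal H_j$ is a trigonometric polynomial of mean zero (because $\zeta\partial_x\zeta=\tfrac12\partial_x(\zeta^2)$), the inclusion $\bigcup_j\mathcal H_j\subset L^2_0$ is automatic. Together these give the statement of Lemma \ref{Lemma-Saturating}.

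Two structural observations come first. (i) $\mathcal H_{j-1}\subset\mathcal H_j$: take all $\zeta_i=0$ in the definition. (ii) If $a,b\in\mathcal H_{j-1}$, then $\partial_x(ab)\in\mathcal H_j$. Indeed, $\mathcal H_j$ is a linear span containing $-\zeta\partial_x\zeta$ for $\zeta=a+b$, $\zeta=a$ and $\zeta=b$, and
$$
(a+b)\partial_x(a+b)-a\partial_xa-b\partial_xb=\partial_x(ab).
$$
So $\mathcal H_j$ contains $\mathcal H_{j-1}$ together with all derivatives of pairwise products of elements of $\mathcal H_{j-1}$.

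The induction step from $j-1$ to $j$ (with $j\geq1$) uses only mode $1$ and the top mode $k=j$, both of which lie in $\mathcal H_{j-1}$ by hypothesis. The product-to-sum formulas give
$$
\partial_x(\cos x\cos kx)=-\tfrac12\big((k+1)\sin(k+1)x+(k-1)\sin(k-1)x\big),
$$
$$
\partial_x(\cos x\sin kx)=\tfrac12\big((k+1)\cos(k+1)x+(k-1)\cos(k-1)x\big).
$$
Both left-hand sides lie in $\mathcal H_j$ by (ii). For $k=1$ the terms with $k-1=0$ vanish, so $\sin 2x,\cos 2x\in\mathcal H_1$ directly. For $k\geq2$ the mode $k-1$ lies in $\mathcal H_{j-1}\subset\mathcal H_j$ by (i), so it can be subtracted off. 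Since $k+1\neq0$, we obtain $\sin(k+1)x,\cos(k+1)x\in\mathcal H_j$, which is mode $j+1$ and closes the induction.

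There is no real obstacle here; the only care needed is the bookkeeping in (ii). One must check that using sums of the $\zeta_i$ indeed isolates the cross term $\partial_x(ab)$ within the span, which is why the definition of $\mathcal H_j$ allows arbitrary finite sums $\sum_i\zeta_i\partial_x\zeta_i$. One must also note that the constant (mean) parts arising from the products are killed by $\partial_x$.
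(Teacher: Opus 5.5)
Your argument is correct and complete: the polarization identity gives $\partial_x(ab)\in\mathcal H_j$ for $a,b\in\mathcal H_{j-1}$, the product-to-sum formulas for $\cos x\cos kx$ and $\cos x\sin kx$ then raise the top frequency by one at each step, and mean-zero trigonometric polynomials are dense in $L^2_0$. The paper gives no proof of this lemma and instead defers to the KdV saturation result (Proposition 2.4 of \cite{C-23}), which relies on this same frequency-raising mechanism, so your write-up is in effect a self-contained version of the argument the paper cites.
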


Let us remark that the directions $\eta-\zeta\partial_x\zeta$ are produced by the nonlinearity $\partial_x(u^2)$ of the BO equation. Thus, the formulation of saturating property in the present setting is the same as that in the KdV case. So we omit the proof of Lemma \ref{Lemma-Saturating}, and the reader is referred to \cite[Proposition 2.4]{C-23} for a detailed and careful proof.

With the help of Lemmas \ref{Lemma-asymptotic} and \ref{Lemma-Saturating}, we are able to prove the 
small-time
approximate controllability.

\begin{lemma}\label{Lemma-smalltime}
Let $u_0,u_1\in L^2_0(\T)$ be arbitrarily given. Then for every $\varepsilon>0$, there exists a time $\delta>0$ and piecewise constant control laws $\eta_1,\eta_2\colon[0,\delta]\rightarrow \R$ such that 
\begin{equation}\label{control1}
\|u(\delta)-u_1\|<\varepsilon,
\end{equation}
where $u\in C([0,T];L^2_0)$ is the solution of \eqref{Control-Problem-0},\eqref{source-term}.
\end{lemma}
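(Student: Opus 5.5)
By Lemma~\ref{Lemma-Saturating}, $\bigcup_j\mathcal H_j$ is dense in $L^2_0$, so there is $N\ge 1$ with $\|u_1-u_0-\xi\|<\varepsilon/2$ for some $\xi\in\mathcal H_N$. It therefore suffices to prove the following property by induction on $j$.

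\medskip
\noindent $(P_j)$: for every $u_0\in L^2_0$, every $\xi\in\mathcal H_j$ and every $\varepsilon>0$, there exist $\delta>0$ and a piecewise constant control with values in $\mathcal H_0$ on $[0,\delta]$ such that $\|\mathcal R_\delta(u_0,\eta)-u_0-\xi\|<\varepsilon$.
\medskip

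\noindent Here and below we write $\mathcal R(u_0,\eta)$ for the solution of \eqref{Control-Problem-0} with control $\eta$ of the form \eqref{source-term}.

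\textbf{Base case $j=0$.} Take the constant control $\eta=\delta^{-1}\xi\in\mathcal H_0$. Lemma~\ref{Lemma-asymptotic} with $\zeta=0$ gives $\mathcal R_\delta(u_0,\delta^{-1}\xi)\to u_0+\xi$ in $L^2$ as $\delta\to0^+$.

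\textbf{Inductive step.} Assume $(P_{j-1})$. Each $\xi\in\mathcal H_j$ is a finite sum of elements of the form $\eta-\zeta\partial_x\zeta$ with $\eta,\zeta\in\mathcal H_{j-1}$. Because the affine targets may be concatenated, with errors propagated through the continuity of the solution map (Proposition~\ref{Prop-GWP}), it suffices to handle a single direction $\eta-\zeta\partial_x\zeta$. Elements of $\mathcal H_{j-1}$ are trigonometric polynomials, so they lie in $C^\infty\cap L^2_0$.

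\emph{Step 1: a conjugation identity.} If $u$ solves \eqref{generalized-problem} with data $(\zeta,g)$, then $u+\zeta$ solves the original equation with force $g$. Consequently
\[
\mathcal R_t(u_0,\zeta,g)+\zeta=\mathcal R_t(u_0+\zeta,0,g).
\]
Applying this with $\delta^{-1/2}\zeta$ and $\delta^{-1}\eta$ in Lemma~\ref{Lemma-asymptotic}, the target $u_0+\eta-\zeta\partial_x\zeta$ is approximated by the following three-stage procedure:
\begin{enumerate}
\item move from $u_0$ approximately to $u_0+\delta^{-1/2}\zeta$;
\item run the original equation with force $\delta^{-1}\eta$ for time $\delta$;
\item move from the resulting state approximately by $-\delta^{-1/2}\zeta$.
\end{enumerate}
After stage~(2) the state equals $\mathcal R_\delta(u_0,\delta^{-1/2}\zeta,\delta^{-1}\eta)+\delta^{-1/2}\zeta$. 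After stage~(3) it is close to $\mathcal R_\delta(u_0,\delta^{-1/2}\zeta,\delta^{-1}\eta)$, which is close to the target.

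\emph{Step 2: realizing each stage.} Stages~(1) and~(3) are translations by $\pm\delta^{-1/2}\zeta\in\mathcal H_{j-1}$, so $(P_{j-1})$ realizes them approximately in small time with $\mathcal H_0$-valued controls.

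In stage~(2), however, the force $\delta^{-1}\eta$ lies in $\mathcal H_{j-1}$ rather than in $\mathcal H_0$. I would handle this in one of two ways:
\begin{itemize}
\item strengthen the induction so that it also covers forced flows with $\mathcal H_{j-1}$-valued forces, replacing them by $\mathcal H_0$-valued controls through the same conjugation trick;
\item more simply, first apply Lemma~\ref{Lemma-asymptotic} with $\eta=0$, which gives the direction $-\zeta\partial_x\zeta$ using only stages~(1) and~(3) and no force. Then reach $+\eta$ by a separate application of $(P_{j-1})$.
\end{itemize}
The second option is cleaner. It requires noting that $\mathcal H_j$ is spanned by $\mathcal H_{j-1}$ together with the elements $-\zeta\partial_x\zeta$. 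Signs pose no difficulty, since $\mathcal H_j$ is a linear span and $(P_{j-1})$ applies to $-\xi$ as well.

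\emph{Step 3: error control.} Fix $\varepsilon$. First choose $\delta$ from Lemma~\ref{Lemma-asymptotic} so that the ideal three-stage procedure, with exact translations, ends within $\varepsilon/3$ of the target. Then choose the accuracy in stage~(3) using the translation-invariant error: an approximate translation by $-\delta^{-1/2}\zeta$ from any initial state is within the prescribed tolerance. Finally choose the accuracy in stage~(1), using the Lipschitz continuity of the solution map on bounded sets (Proposition~\ref{Prop-GWP}, with $s=0$) to propagate the small error through the zero-force flow of duration $\delta$ in stage~(2) and through the later stages. The total time is a sum of small times, and the controls remain piecewise constant and $\mathcal H_0$-valued.

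\textbf{Main obstacle.} The hardest point is making $(P_{j-1})$ uniform enough to be composed. The initial state in stage~(3) depends on earlier choices, and the translation $\delta^{-1/2}\zeta$ is large. Since $(P_{j-1})$ holds for every initial datum and every target, this is handled by choosing the tolerances in reverse order, as in Step~3, and invoking continuity only where fixed bounded sets are involved.
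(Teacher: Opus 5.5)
Your argument is correct, and its skeleton is the paper's: saturation to choose $\xi\in\mathcal H_N$, the base case from Lemma~\ref{Lemma-asymptotic} with $\zeta=0$, the conjugation identity, and the three stages ``shift by $\delta^{-1/2}\zeta$, flow, shift back'', with continuity used only to carry the stage-(1) error through stage (2).

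The one real difference is in the middle stage, and it works in your favour. The paper applies Lemma~\ref{Lemma-asymptotic} directly to $\eta-\zeta\partial_x\zeta$ with $\eta,\zeta\in\mathcal H_j$. Its middle stage is therefore driven by the force $\delta_1^{-1}\eta$, which for $j\ge 1$ is not of the form $\eta_1(t)\sin x+\eta_2(t)\cos x$. The paper does not address this, so as written its induction only produces controls with values in the larger spaces $\mathcal H_j$.

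Your second option avoids this. You use Lemma~\ref{Lemma-asymptotic} with $\eta=0$, so the middle stage is the free flow. You then reach the $\mathcal H_{j-1}$-component by a separate application of $(P_{j-1})$, using $\mathcal H_j=\mathcal H_{j-1}+\mathrm{span}\{\zeta\partial_x\zeta:\zeta\in\mathcal H_{j-1}\}$. This costs one extra concatenation but keeps every control $\mathcal H_0$-valued, which is what the lemma claims. Concatenating targets needs no continuity: since $(P_{j-1})$ holds from every initial state, the triangle inequality suffices.

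One justification needs repair. ``$(P_{j-1})$ applies to $-\xi$'' settles signs only for the linear part. The quadratic construction yields only $-c^2\zeta\partial_x\zeta$, i.e. the convex cone generated by $\{-\zeta\partial_x\zeta\}$, while a general element of the span may require $+\zeta\partial_x\zeta$. This is true here, but it needs an argument:
\begin{enumerate}
\item Each $\mathcal H_j$ is translation invariant (the construction preserves this), hence a sum of blocks $\mathrm{span}\{\cos kx,\sin kx\}$.
\item For $\zeta=a\cos kx+b\sin kx$ and $\tilde\zeta=a\sin kx-b\cos kx$ one has $\zeta^2+\tilde\zeta^2=a^2+b^2$, so $\zeta\partial_x\zeta=-\tilde\zeta\partial_x\tilde\zeta$.
\item Cross terms follow from $\partial_x(\alpha\beta)=-(\alpha-\beta)\partial_x(\alpha-\beta)+\alpha\partial_x\alpha+\beta\partial_x\beta$.
\end{enumerate}
The paper passes over the same point with ``without loss of generality''.
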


\begin{proof}[\bf Proof]
For every $\eta\in \mathcal H_0$ and $\varepsilon>0$, 
one can apply Lemma \ref{Lemma-asymptotic} with $\zeta\equiv 0$. It then follows that there exists a time $\delta\in(0,1)$ such that
$$
\|\mathcal R_\delta(u_0,\delta^{-1}\eta)-(u_0+\eta)\|<\varepsilon.
$$
This implies the approximate controllability to $u_0+\eta$ via the control $\hat\eta:=\delta^{-1}\eta$.

In what follows we argue by induction. Assume that the approximate controllability to $u_0+\mathcal H_j$ via $\mathcal H$-valued controls holds true. The next task is to establish the controllability to $u_0+\mathcal H_{j+1}$. Without loss of generality we consider $\tilde\eta\in\mathcal H_{j+1}$ of the form
$$
\tilde\eta=\eta-\zeta\partial_x\zeta,\quad \eta,\zeta\in \mathcal H_j.
$$
Applying Lemma \ref{Lemma-asymptotic} and using the fact 
$$
\mathcal R_\delta(u_0,\delta^{-1/2}\zeta,\delta^{-1}\eta)=\mathcal R_\delta(u_0+\delta^{-1/2}\zeta,\delta^{-1}\eta)-\delta^{-1/2}\zeta,
$$
it follows that for every $\varepsilon>0$ there exists a time $\delta_1>0$ such that
\begin{equation*}
\|\mathcal R_{\delta_1}(u_0+\delta_1^{-1/2}\zeta,\hat\eta_1)-(\delta_1^{-1/2}\zeta+u_0+\tilde\eta)\|<\frac{\varepsilon}{2}
\end{equation*} 
with $\hat\eta_1=\delta_1^{-1}\eta$.
By Proposition \ref{Prop-GWP} one can take $r>0$ so that
\begin{equation}\label{approximate-1}
\|\mathcal R_{\delta_1}(u_1',\hat\eta_1)-(\delta_1^{-1/2}\zeta+u_0+\tilde\eta)\|<\frac{\varepsilon}{2},
\end{equation}
whenever $\|u_1'-(u_0+\delta_1^{-1/2}\zeta)\|<r$. By induction hypothesis, there exists a time $\delta_2>0$ and a piecewise constant control $\hat \eta_2\colon [0,\delta_2]\rightarrow\mathcal H$ such that
$$
\|\mathcal R_{\delta_2}(u_0,\hat\eta_2)-(u_0+\delta_1^{-1/2}\zeta)\|<r.
$$
Taking $u_1'=\mathcal R_{\delta_2}(u_0,\hat\eta_2)$ in (\ref{approximate-1}) gives rise to 
\begin{equation}\label{approximate-2}
\|\mathcal R_{\delta_1+\delta_2}(u_0,\hat\eta_2*\hat\eta_1)-(\delta_1^{-1/2}\zeta+u_0+\tilde\eta)\|<\frac{\varepsilon}{2}.
\end{equation} 
Again by the induction hypothesis, there exists a time $\delta_3>0$ and a piecewise constant control $\hat\eta_3\colon [0,\delta_3]\rightarrow\mathcal H$ such that
\begin{equation}\label{approximate-3}
\|\mathcal R_{\delta_3}(\mathcal R_{\delta_1+\delta_2}(u_0,\hat\eta_2*\hat\eta_1),\hat\eta_3)-(\mathcal R_{\delta_1+\delta_2}(u_0,\hat\eta_2*\hat\eta_1)-\delta_1^{-1/2}\zeta)\|<\frac{\varepsilon}{2}.
\end{equation}
Note that $$\mathcal R_{\delta_3}(\mathcal R_{\delta_1+\delta_2}(u_0,\hat\eta_2*\hat\eta_1),\hat\eta_3)=\mathcal R_{\delta}(u_0,\eta)$$ with $\delta=\delta_1+\delta_1+\delta_3$ and $\eta=\hat\eta_2*\hat\eta_1*\hat\eta_3$. Combining (\ref{approximate-2}) and (\ref{approximate-3}) we conclude that
\begin{equation*}
\begin{aligned}
\|\mathcal R_{\delta}(u_0,\eta)-(u_0+\tilde\eta)\|&\leq \|\mathcal R_{\delta}(u_0,\eta)-(\mathcal R_{\delta_1+\delta_2}(u_0,\hat\eta_1*\hat\eta_2)-\delta_1^{-1/2}\zeta)\|\\
&\quad\quad+\|\mathcal R_{\delta_1+\delta_2}(u_0,\hat\eta_1*\hat\eta_2)-(\delta_1^{-1/2}\zeta+u_0+\tilde\eta)\|\\
&<\varepsilon.
\end{aligned}
\end{equation*}
This gives rise to the controllability to $u_0+\mathcal H_{j}$ for each $j\geq 0$.

Finally, the conclusion of this lemma follows from the saturating property given in Lemma~\ref{Lemma-Saturating}. 
\end{proof}

The final step is to derive
the approximate controllability in any time $T>0$.

\begin{proof}[{\bf Proof of Theorem \ref{Main-theorem-1}}]
Let $\varepsilon>0$ be arbitrarily given. Thanks to Lemma \ref{Lemma-smalltime} with $u_0=0$, there exists a small time $\delta_1>0$ and a control $\eta_1\colon[0,\delta_1]\rightarrow\mathcal H$ such that
\begin{equation}\label{control2}
\|\mathcal R_{\delta_1}(0,\eta_1)-u_1\|<\frac{\varepsilon}{2}.
\end{equation}
We extend $\eta_1$ on $[0,T]$ be zero. Then, invoking the continuity of solution map (that has been assured by Proposition~\ref{Prop-GWP}), there exists a constant $r>0$ such that
\begin{equation}\label{control3}
\|\mathcal R_t(u_1',\eta')-\mathcal R_t(0,\eta')\|<\frac{\varepsilon}{2},\quad t\in[0,T]
\end{equation}
whenever $\|u_1'\|\leq r$ and $\|\eta\|_{_{L^2_TL^2_x}}\leq 2\|\eta_1\|_{_{L^2_TL^2_x}}$. Again by Lemma \ref{Lemma-smalltime}, one can construct an $\mathcal H$-valued control $\eta_2$ driving system (\ref{Control-Problem-0}) from $u_0$ to the $r$-neighborhood of $0$ in a small time $\delta_2>0$, i.e.
\begin{equation}\label{control4}
\|\mathcal R_{\delta_2}(u_0,\eta_2)\|<r.
\end{equation}

Let $\eta_3(t)\equiv 0$ for $t\in [0,T-\delta_1-\delta_2]$ and take $\eta'=\eta_1*\eta_3$ in (\ref{control3}). It thus follows that
\begin{equation*}
\|\mathcal R_{T-\delta_2}(u_1',\eta_1*\eta_3)-\mathcal R_{T-\delta_2}(0,\eta_1*\eta_3)\|<\frac{\varepsilon}{2}
\end{equation*}
whenever $\|u_1'\|\leq r$. This together with (\ref{control4}) implies that 
$$
\begin{aligned}
&\|\mathcal R_{T}(u_0,\eta)-\mathcal R_{T-\delta_2}(0,\eta_1*\eta_3)\|\\
&=\|\mathcal R_{T-\delta_2}(\mathcal R_{\delta_2}(u_0,\eta_2),\eta_1*\eta_3)-\mathcal R_{T-\delta_2}(0,\eta_1*\eta_3)\|\\
&<\varepsilon
\end{aligned}
$$
with $\eta=\eta_2*\eta_1*\eta_3$. Using (\ref{control2}) and
the fact 
$$
\mathcal R_{T-\delta_2}(0,\eta_1*\eta_3)=\mathcal R_{\delta_1}(\mathcal R_{T-\delta_1-\delta_2}(0,\eta_3),\eta_1)=\mathcal R_{\delta_1}(0,\eta_1),
$$
we arrive at 
$$
\|\mathcal R_{T}(u_0,\eta)-u_1\|<\varepsilon.
$$
The proof is then complete.
\end{proof}

\section{Unbounded trajectory via random perturbation}\label{Section-4}

We continue to use the random setting in Theorem \ref{Main-theorem-2}. Without loss of generality, let us restrict our attention to mean-zero solutions. Otherwise, the analysis below will also work in the context of $H^s_c\,(c\in\R)$ after trivial modifications. This is because the main materials, i.e. Theorem \ref{Main-theorem-1} and Proposition \ref{Prop-GWP}, remain valid for arbitrarily given mass (see Remarks \ref{changevariable-1} and \ref{changevariable-2}).

We introduce a discrete-time Markov process $u_k,k\in\N$ in $H^s_0\,(s\in[0,1])$ given by 
\begin{equation*}
u_{k+1}=\mathcal R_T(u_k,\eta^k),\quad 
u_0\in H^s_0,
\end{equation*}
where 
$$
\eta^k(t,x)=\eta_{1,k}(t)\sin x+\eta_{2,k}(t)\cos x.
$$
The process is well-defined thanks to Proposition \ref{Prop-GWP}, and the Markov property follows from the setting of random noise.

We denote by $\Pb_{u_0}$ the probability measure associated with the trajectory issued from $u_0$, and by $\E_{u_0}$ the corresponding expectation. The ﬁltration generated by the Markov family $(u_k,\Pb_{u_0})$ is written as $\mathcal F_k$. The reader is referred to \cite[Chapter 1.3.1]{KS-12}) for further description of these notions.

With the above preparations, we are in a position to prove the unboundedness of Sobolev norms for (\ref{Control-Problem-0}).

\begin{proof}[\bf Proof of Theorem \ref{Main-theorem-2}]
Given $M>0$ we define a stopping time
$$
\tau_M=\min \{k\geq 0;\|u_k\|_s>M\},
$$
where $\tau_M=\infty$ if the set is empty. By the arbitrariness of $M$,
the conclusion of this theorem could follow from 
\begin{equation}\label{proba-1}
\Pb_{u_0}\{\tau_M<\infty\}=1,\quad u_0\in H^s_0.
\end{equation}
To verify (\ref{proba-1}), the key step is to check
\begin{equation}\label{proba-2}
p:=\sup_{u_0\in H^s_0}\Pb_{u_0}\{\tau_M>1\}<1,
\end{equation}
or equivalently, 
\begin{equation}\label{proba-3}
\inf_{u_0\in H^s_0 }\Pb_{u_0}\{\tau_M\leq 1\}>0.
\end{equation}
If (\ref{proba-2}) is true, one can invoke the Markov property to infer that for $n\geq 2$,
\begin{equation*}
\begin{aligned}
\Pb_{u_0}\{\tau_M>n\}&=\E_{u_0}\Pb_{u_0}\{\tau_M>n|\mathcal F_n\}\\
&=\E_{u_0}(1_{\{\tau_M>n-1\}}\Pb_{u_{n-1}}\{\tau_M>1\})\\
&\leq p\Pb_{u_0}\{\tau_M>n-1\}.
\end{aligned}
\end{equation*}
Here, $1_A$ denotes the characteristic function of a set $A$.
By iteration it follows that 
$$
\Pb_{u_0}\{\tau_M>n\}\leq p^n.
$$
Therefore, an application of the Borel-Cantelli lemma yields (\ref{proba-1}).

Now, it suffices to verify (\ref{proba-3}). We first note that
$$
\Pb_{u_0}\{\tau_M=0\}=1,\quad \|u_0\|_s>M.
$$
So, the issue is reduced to prove 
\begin{equation}\label{proba-4}
\inf_{u_0\in B_s(M) }\Pb_{u_0}\{\tau_M= 1\}>0,
\end{equation}
where $B_s(M)$ stands for the closed ball in $H^s_0$ centered at the origin and with radius $M$.
Let $u_0\in B_s(M)$ be arbitrarily given. It then follows from Theorem \ref{Main-theorem-1} that there exists a piecewise constant control $\hat\eta\colon [0,T]\rightarrow\mathcal H$ such that
\begin{equation*}
\|\mathcal R_T(u_0,\hat\eta)\|>M.
\end{equation*}
Due to the nondegeneracy of random noise and the continuity of $\mathcal R_T$ in $H^s$ (see Proposition~\ref{Prop-GWP}), there exists a small constant $r>0$ such that
\begin{equation*}
\inf_{\|u_0'-u_0\|<r,[u_0']=0}\Pb\{\|\mathcal R_T(u_0',\eta^0)\|>M\}\geq \Pb\{\|\eta^0-\hat \eta\|_{_{L^2_TL^2_x}}< r\}>0.
\end{equation*}
Then, using the compactness of $B_s(M)$ in $L^2_0$, we obtain
$$
\inf_{u_0\in B_s(M)}\Pb\{\|\mathcal R_T(u_0,\eta^0)\|>M\}>0.
$$
This, together with the fact $$\{\|\mathcal R_T(u_0,\eta^0)\|>M\}\subset\{\|\mathcal R_T(u_0,\eta^0)\|_s>M\}=\{\tau_M= 1\},$$ implies (\ref{proba-4}), as desired. 
\end{proof}

\bigskip

\subsection*{Acknowledgements}

The author would like to thank Professor Shengquan Xiang for bringing to my attention the availability of the Agrachev-Sarychev approach to dispersive equations. This work was initiated when the author was visiting Peking University in March of 2026. He thanks the 
institute for hospitality.

The work of the author was supported by the National Natural Science Foundation 
of China grants 12571474.

\bigskip\bigskip\bigskip

\normalem
\bibliographystyle{plain}
\bibliography{References}

\end{document}